\theoremstyle{plain}
\newtheorem{thm}{Theorem}[section]
\newtheorem{lem}{Lemma}[section]
\newtheorem{cor}{Corollary}[section]
\newtheorem{prop}{Proposition}[section]
\theoremstyle{definition}
\newtheorem{dfn}{Definition}[section]
\newtheorem{exmp}{Example}[section]
\newtheorem{rem}{Remark}[section]
\def\R{{\mathbb{R}}}
\def\U{{\mathcal{U}}}
\numberwithin{equation}{section}
\title{Decomposability, Convexity and Continuous Linear Operators in $L^1(\mu,E)$: The Case for Saturated Measure Spaces\thanks{This research is supported by JSPS KAKENHI Grant Number JP18K01518 from the Ministry of Education, Culture, Sports, Science and Technology, Japan.}}
\date{\today}
\author{Nobusumi Sagara\thanks{I am benefitted by the useful comment from M. Ali Khan.} \\ \\
{\small Faculty of Economics, Hosei University} \\
{\small 4342, Aihara, Machida, Tokyo, 194-0298, Japan} \\
{\footnotesize e-mail: nsagara@hosei.ac.jp}}
\begin{document}
\maketitle
\setcounter{page}{0}
\thispagestyle{empty}

\begin{abstract} 
Motivated by the Lyapunov convexity theorem in infinite dimensions, we extend the convexity of the integral of a decomposable set to separable Banach spaces under the strengthened notion of nonatomicity of measure spaces, called ``saturation'', and provide a complete characterization of decomposability in terms of saturation. 
\end{abstract}

\noindent\textbf{Keywords.} decomposability; convexity; Bochner integral; Lyapunov convexity theorem; nonatomicity; saturated measure space.  \\

\noindent \textbf{MSC 2010:} 28B05, 28B20, 46G10.
\clearpage
\tableofcontents

\section{Introduction}
Let $(\Omega,\Sigma,\mu)$ be a finite measure space. Denote by $L^1(\mu,\R^n)$ the space of integrable functions from $\Omega$ to $\R^n$, normed by $\| f \|_1=\int \| f(\omega) \|d\mu$ for $f\in L^1(\mu,\R^n)$. A subset $K$ of $L^1(\mu,\R^n)$ is said to be \textit{decomposable} if $\chi_Af+(1-\chi_A)g\in K$ for every $f,g\in K$ and $A\in \Sigma$, where $\chi_A$ is the characteristic function of $A$. The notion of decomposability was originally introduced in \cite{ro68,ro76} to explore the duality theory for integral functionals defined on decomposable sets, which had turned out to be useful in optimal control, variational geometry, differential inclusions, and fixed point theorems in the context of nonconvexity; see \cite{cr98,fr04,mw19,ol90}. 

Our main concern in this paper is an attempt to extend the following specific result on decomposability to infinite dimensions. 

\begin{thm}[\citet{ol84}]
\label{thm1}
Let $(\Omega,\Sigma,\mu)$ be a nonatomic finite measure space. If $K$ is a decomposable subset of $L^1(\mu,\R^n)$, then the set $\{ \int fd\mu\mid f\in K \}$ is convex. Furthermore, if $K$ is bounded and closed, then $\{ \int fd\mu\mid f\in K \}$ is compact and convex. 
\end{thm}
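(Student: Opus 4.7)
The plan is to derive convexity from Lyapunov's convexity theorem for $\R^n$-valued vector measures, and then to combine that with nonatomicity to upgrade to compactness when $K$ is bounded and closed.

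For the convexity assertion, fix $f,g\in K$ and $\alpha\in[0,1]$. Consider the $\R^n$-valued vector measure $\nu:\Sigma\to\R^n$ defined by $\nu(A)=\int_A(f-g)\,d\mu$. Since each component of $\nu$ is absolutely continuous with respect to the nonatomic $\mu$, each component is nonatomic, so Lyapunov's theorem says the range $\nu(\Sigma)\subset\R^n$ is (compact and) convex. Because $\nu(\emptyset)=0$ and $\nu(\Omega)=\int(f-g)\,d\mu$, there exists $A\in\Sigma$ with $\nu(A)=\alpha\,\nu(\Omega)$. Put $h=\chi_A f+(1-\chi_A)g$; decomposability of $K$ yields $h\in K$, and
\[
\int h\,d\mu = \int_A (f-g)\,d\mu + \int g\,d\mu = \alpha\int f\,d\mu + (1-\alpha)\int g\,d\mu,
\]
which places the desired convex combination in $\{\int f\,d\mu\mid f\in K\}$.

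For the second statement, write $S=\{\int f\,d\mu\mid f\in K\}$. If $\sup_{f\in K}\|f\|_1\le M$, then $\|\int f\,d\mu\|\le M$, so $S$ is bounded in $\R^n$; together with the convexity just established, $\overline{S}$ is compact and convex. It therefore suffices to prove $S=\overline{S}$. Given $v\in\overline{S}$ and $v_k=\int f_k\,d\mu\to v$ with $f_k\in K$, I would apply Lyapunov iteratively to the joint vector measure $A\mapsto(\int_A f_1\,d\mu,\dots,\int_A f_k\,d\mu)\in\R^{nk}$ to partition $\Omega$ into measurable sets $A_{k,1},\dots,A_{k,k}$ with $\int_{A_{k,j}}f_i\,d\mu=\tfrac{1}{k}v_i$ for all $i,j\le k$. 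Setting $\tilde f_k=\sum_{j=1}^{k}\chi_{A_{k,j}}f_j$, iterated decomposability gives $\tilde f_k\in K$, and a Ces\`aro-mean computation yields $\int\tilde f_k\,d\mu=\tfrac{1}{k}\sum_{j=1}^{k}v_j\to v$.

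The main obstacle is then extracting from $\{\tilde f_k\}$ a limit in $K$ whose integral equals $v$: mere $L^1$-boundedness yields no compactness, and $K$, though decomposable, need not be convex, so the standard Dunford--Pettis/Mazur machinery does not apply directly. I expect one must either invoke a Koml\'os-type subsequence-plus-Ces\`aro argument or refine the construction of $\tilde f_k$ along a nested sequence of partitions so that $\{\tilde f_k\}$ becomes Cauchy in $L^1$, after which closedness of $K$ supplies an $f\in K$ with $\int f\,d\mu=v$. This closedness step is the technical heart of the proof, whereas the convexity assertion is essentially a one-line consequence of Lyapunov.
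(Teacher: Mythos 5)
Your convexity argument is correct and is exactly the mechanism this paper uses in the proof of Theorem \ref{thm5}: apply Lyapunov to the vector measure $A\mapsto\int_A(f-g)\,d\mu$, pick $A$ with $\nu(A)=\alpha\nu(\Omega)$, and feed $\chi_Af+(1-\chi_A)g$ back into $K$ via decomposability. (The paper itself does not reprove Theorem \ref{thm1}; it quotes Olech. But that first half is complete as you wrote it.)

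The compactness half has a genuine gap, and you have correctly located it but not closed it. Two specific problems. First, the intermediate construction $\tilde f_k=\sum_{j=1}^k\chi_{A_{k,j}}f_j$ buys you nothing: it produces yet another sequence in $K$ whose integrals converge to $v$, which is what the original sequence $\{f_k\}$ already was. Nothing in the construction controls $\|\tilde f_{k+1}-\tilde f_k\|_1$, so there is no reason for $\{\tilde f_k\}$ to be Cauchy, and a Koml\'os argument only delivers a.e.\ convergence of Ces\`aro means of a subsequence --- the limit of such means need not lie in $K$, since $K$ is not assumed convex. Second, the naive single-step exchange (move a $\lambda$-fraction of mass from $g_k$ toward $f_{k+1}$) faces an unavoidable tension: getting the integral close to $v$ forces $\lambda$ near $1$, which makes the $L^1$ increment of order $\|f_{k+1}-g_k\|_1$, not summable. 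So neither route you sketch is known to terminate.

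The standard way to finish --- and the one consistent with the machinery this paper assembles for Theorem \ref{thm4} --- is structural rather than sequential. Since $K$ is nonempty, closed, bounded and decomposable, Lemma \ref{lem1} represents $K=\mathcal{S}^1_\Phi$ for a measurable closed-valued $\Phi:\Omega\twoheadrightarrow\R^n$, and boundedness of $\mathcal{S}^1_\Phi$ forces $\Phi$ to be integrably bounded (Hiai--Umegaki), hence $K$ is uniformly integrable and, by Dunford--Pettis in $\R^n$, relatively weakly compact in $L^1(\mu,\R^n)$. Given $\int f_k\,d\mu\to v$, extract $f_{k_j}\rightharpoonup f$ weakly; then $\int f\,d\mu=v$ and $f(\omega)\in\overline{\mathrm{co}}\,\Phi(\omega)$ a.e., so $v\in\int\overline{\mathrm{co}}\,\Phi\,d\mu$, which equals $\int\Phi\,d\mu=\{\int g\,d\mu\mid g\in K\}$ by the Aumann--Richter identity for nonatomic $\mu$ in finite dimensions. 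That identity (a consequence of Lyapunov) is the missing idea; you cannot avoid passing through the convexified multifunction, because the weak limit $f$ itself need not belong to the non-convex set $K$.
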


\noindent
In Theorem \ref{thm1}, decomposability substitutes the convexity of $K$ because the latter implies decomposability and the convexity of $\{ \int fd\mu\mid f\in K \}$, which reveals the fact that nonatomicity together with decomposability replaces the consequence of the classical Lyapunov convexity theorem that guarantees the convexity of the integral of a multifunction. As shown below, Theorem \ref{thm1} is, however, no longer true for every infinite dimensional Banach space in view of the celebrated failure of the Lyapunov convexity theorem in infinite dimensions. It is well-known that the Lyapunov convexity theorem is valid only in the sense of approximation in that the closure operation is involved; see \cite{uh69} and Remark \ref{rem1} below. 

Motivated by the Lyapunov convexity theorem in infinite dimensions established in \cite{ks13}, we extend the convexity of the integral of a decomposable set to separable Banach spaces under the strengthened notion of nonatomicity of measure spaces, called ``saturation'', and provide a complete characterization of decomposability in terms of saturation.

\section{Preliminaries}
\subsection{Bochner Integrals in Banach Spaces}
\label{subsec1}
Let $(\Omega,\Sigma,\mu)$ be a complete finite measure space and $(E,\|\cdot\|)$ be a Banach space with its dual $E^*$ furnished with the dual system $\langle \cdot,\cdot \rangle$ on $E^*\times E$. A function $f:\Omega\to E$ is said to be \textit{strongly measurable} if there exists a sequence of simple (or finitely valued) functions $f_n:\Omega\to E$ such that $\|f(\omega)-f_n(\omega)\|\to 0$ a.e.\ $\omega\in \Omega$; $f$ is said to be \textit{Bochner integrable} if it is strongly measurable and $\int \|f(\omega)\|d\mu<\infty$, where the \textit{Bochner integral} of $f$ over $A\in \Sigma$ is defined by $\int_A fd\mu=\lim_n\int_A f_nd\mu$. By the Pettis measurability theorem (see \cite[Theorem II.1.2]{du77}), $f$ is strongly measurable if and only if it is Borel measurable with respect to the norm topology of $E$ whenever $E$ is separable. Denote by $L^1(\mu,E)$ the space of ($\mu$-equivalence classes of) $E$-\hspace{0pt}valued Bochner integrable functions on $\Omega$ such that $\| f(\cdot) \|\in L^1(\mu)$, normed by $\| f \|_1=\int \| f(\omega) \|d\mu$. A subset $K$ of $L^1(\mu,E)$ is said to be \textit{uniformly integrable} if 
$$
\lim_{\mu(A)\to 0}\sup_{f\in K}\int_A\| f \|d\mu=0. 
$$
 
A function $f:\Omega\to E^*$ is said to be \textit{weakly$^*$ measurable} if for every $x\in E$ the scalar function $\langle f(\cdot),x \rangle:\Omega\to \R$ is measurable. Denote by $L^\infty_{\textit{w}^*}(\mu,E^*)$ the space of weakly$^*$ measurable functions from $\Omega$ to $E^*$ such that $\| f(\cdot) \|_{E^*}\in L^\infty(\mu)$, normed by $\| f \|_\infty=\mathrm{ess.\,sup}\| f(\omega) \|_{E^*}$. The dual space of $L^1(\mu,E)$ is given by $L^\infty_{\textit{w}^*}(\mu,E^*)$ whenever $E$ is separable and the dual system is given by $\langle f,g \rangle=\int\langle f(t),g(t) \rangle d\mu$ with $f\in L^\infty_{\textit{w}^*}(\mu,E^*)$ and $g\in L^1(\mu,E)$; see \cite[Theorem 2.112]{fl07}. Let $F$ be a Banach space and $T:L^1(\mu,E)\to F$ be a linear operator. Then $T$ is norm-\hspace{0pt}to-\hspace{0pt}norm continuous if and only if it is weak-\hspace{0pt}to-\hspace{0pt}weak continuous; see \cite[Theorem 2.5.11]{me98}. In particular, the \textit{integration operator} $T:L^1(\mu,E)\to E$ defined by $Tf=\int fd\mu$ is a norm-\hspace{0pt}to-\hspace{0pt}norm continuous linear operator.  

A set-\hspace{0pt}valued mapping $\Gamma$ from $\Omega$ to the family of nonempty subsets of $E$ is called a \textit{multifunction}. A multifunction $\Gamma:\Omega\twoheadrightarrow E$ is said to be \textit{measurable} if the set $\{\omega\in \Omega\mid \Gamma(\omega)\cap U\ne \emptyset \}$ is in $\Sigma$ for every open subset $U$ of $E$; it is said to be \textit{graph measurable} if the set $\mathrm{gph}\,\Gamma:=\{ (\omega,x)\in \Omega\times E\mid x\in \Gamma(\omega) \}$ belongs to $\Sigma\otimes \mathrm{Borel}(E,\| \cdot \|)$, where $\mathrm{Borel}(E,\| \cdot \|)$ is the Borel $\sigma$-\hspace{0pt}algebra of $(E,\| \cdot \|)$ generated by the norm topology. If $E$ is separable, then $\mathrm{Borel}(E,\| \cdot \|)$ coincides with the Borel $\sigma$-\hspace{0pt}algebra $\mathrm{Borel}(E,\mathit{w})$ of $E$ generated by the weak topology; see \cite[Part I, Chap.\,II, Corollary 2]{sc73}. It is well-\hspace{0pt}known that for closed-\hspace{0pt}valued multifunctions, measurability and graph measurability coincide whenever $E$ is separable; see \cite[Theorem III.30]{cv77}. 

A function $f:\Omega\to E$ is called a \textit{selector} of $\Gamma$ if $f(\omega)\in \Gamma(\omega)$ a.e.\ $\omega\in \Omega$. If $E$ is separable, then by the Aumann measurable selection theorem, a multifunction $\Gamma$ with measurable graph admits a measurable selector (see \cite[Theorem III.22]{cv77}) and it is also strongly measurable. A multifunction $\Gamma:\Omega\twoheadrightarrow E$ is said to be \textit{integrably bounded} if there exists $\varphi\in L^1(\mu)$ such that $\| x \|\le \varphi(\omega)$ for every $x\in \Gamma(\omega)$ a.e.\ $\omega\in \Omega$. If $\Gamma$ is graph measurable and integrably bounded, then it admits a Bochner integrable selector whenever $E$ is separable. Denote by $\mathcal{S}^1_\Gamma$ the set of Bochner integrable selectors of $\Gamma$. A measurable multifunction $\Gamma$ with closed values is integrably bounded if and only if $\mathcal{S}^1_\Gamma$ is bounded in $L^1(\mu,E)$ whenever $E$ is separable; see \cite[Theorem 3.2]{hu77}. The Bochner integral of $\Gamma$ is defined by $\int\Gamma d\mu:=\{ \int fd\mu \mid f\in \mathcal{S}^1_\Gamma \}$. 

The following result provides a sufficient condition for the weak compactness in $L^1(\mu,E)$ that is easy to check. 

\begin{thm}[\citet{drs93}]
\label{thm2}
If $K$ is a bounded and uniformly integrable subset of $L^1(\mu,E)$ such that there exists a multifunction $\Gamma:\Omega\twoheadrightarrow E$ with relatively weakly compact values satisfying $f(\omega)\in \Gamma(\omega)$ for every $f\in K$ and $\omega\in \Omega$, then $K$ is relatively weakly compact in $L^1(\mu,E)$. 
\end{thm}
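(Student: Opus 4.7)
My plan is to verify the following tightness characterization of weak compactness in $L^1(\mu,E)$: a bounded and uniformly integrable $K\subset L^1(\mu,E)$ is relatively weakly compact if and only if for every $\varepsilon>0$ there exist a weakly compact $W_\varepsilon\subset E$ and $A_\varepsilon\in\Sigma$ with $\mu(\Omega\setminus A_\varepsilon)<\varepsilon$ such that $f(\omega)\in W_\varepsilon$ for every $f\in K$ and a.e.\ $\omega\in A_\varepsilon$. Boundedness and uniform integrability are supplied by hypothesis; the task reduces to producing the pair $(W_\varepsilon,A_\varepsilon)$ from the pointwise envelope $\Gamma$.

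The construction proceeds in two stages. First, from $\sup_{f\in K}\|f\|_1<\infty$ and Markov's inequality, choose $N_\varepsilon$ with $\mu(\{\|f\|>N_\varepsilon\})<\varepsilon/2$ uniformly for $f\in K$; on the trimmed piece each $f(\omega)$ lies in $\Gamma(\omega)\cap\{x\in E:\|x\|\le N_\varepsilon\}$. Second, replace $\Gamma(\omega)$ by its weak closure, which remains weakly compact pointwise, and apply a Lusin--Egoroff--type exhaustion against a countable norming subset of $E^*$: the Pettis measurability of each $f\in K$ supplies essential separable-valuedness, which in turn allows one to absorb into $\Omega\setminus A_\varepsilon$ those $\omega$ at which no single weakly compact slice dominates $\overline{\Gamma(\omega)}^{w}$. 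Krein--\v{S}mulian applied to the closed convex hull of the resulting slice guarantees the envelope $W_\varepsilon$ may be taken weakly compact rather than merely bounded. Invoking the characterization then closes the argument.

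The principal obstacle is precisely this uniformity in the second stage. The hypothesis provides only pointwise relative weak compactness of $\Gamma(\omega)$, with no measurability of $\Gamma$ and no uniform bound across $\omega$, so the uniformity must be recovered from $K$ itself, leveraging separability of $E$ to diagonalise against a countable norming set in $E^*$. Ensuring that the envelope is weakly, rather than merely norm, compact is the technical heart of the argument, and this is where Krein--\v{S}mulian and the interplay between uniform integrability and weak closures become essential. Once these ingredients are assembled, the tightness criterion applied to $K$ yields the conclusion.
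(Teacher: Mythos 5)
First, a point of reference: the paper does not prove Theorem \ref{thm2} at all --- it is quoted with attribution from Diestel, Ruess and Schachermayer \cite{drs93} and used as a black box. Your attempt therefore has to stand on its own as a proof of that (genuinely nontrivial) result, and it does not.

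The central step of your argument is asserted rather than proved, and the tools you name cannot supply it. You reduce everything to a ``tightness'' criterion: for each $\varepsilon>0$, a \emph{single} weakly compact $W_\varepsilon\subset E$ and a set $A_\varepsilon$ of nearly full measure with $f(\omega)\in W_\varepsilon$ for all $f\in K$ and a.e.\ $\omega\in A_\varepsilon$. The sufficiency of tightness (plus boundedness and uniform integrability) is fine --- it is essentially the special case of the theorem in which $\Gamma$ is constant, which is classical. But the hypothesis gives only, for each fixed $\omega$ separately, a relatively weakly compact $\Gamma(\omega)$, with no measurability of $\Gamma$ and no relation between the sets at different $\omega$; passing from this to one $W_\varepsilon$ valid simultaneously for almost all $\omega\in A_\varepsilon$ is exactly the content of the theorem, and your second stage does not achieve it. Egoroff/Lusin exhaustion applies to a sequence of measurable functions, whereas $K$ may be uncountable and $\omega\mapsto\Gamma(\omega)$ need not be measurable; and Krein--\v{S}mulian only says that the closed convex hull of a weakly compact set is weakly compact --- it says nothing about $\overline{\mathrm{co}}\bigl(\bigcup_{\omega\in A_\varepsilon}\Gamma(\omega)\bigr)$, the closed convex hull of an uncountable union of weakly compact sets, which is in general not weakly compact. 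Worse, the ``characterization'' you propose to verify is false as an equivalence already for $E=\R$: the translates $\{g(\cdot-t):t\in[0,1]\}$ of an unbounded $g\in L^1[0,1]$ form a relatively weakly compact set (Dunford--Pettis) admitting no such pair $(W_\varepsilon,A_\varepsilon)$. Your first stage has the same uniformity defect: Markov's inequality controls $\mu(\{\|f\|>N\})$ for each $f$ separately, but the exceptional set depends on $f$; one must instead use the essential supremum of the family $\{\|f(\cdot)\|: f\in K\}$. The actual argument in \cite{drs93} avoids tightness entirely: by Eberlein--\v{S}mulian it suffices to treat sequences $(f_n)\subset K$, and their main theorem shows that a bounded, uniformly integrable set is relatively weakly compact as soon as every sequence in it admits convex blocks $g_n\in\mathrm{co}\{f_n,f_{n+1},\dots\}$ converging weakly a.e.; such blocks are extracted precisely because, for each fixed $\omega$, $(f_n(\omega))_n$ lies in the weakly compact set $\overline{\Gamma(\omega)}^{\,w}$. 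Reducing to sequences is what circumvents the uniformization over $\omega$ on which your approach founders.
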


\noindent
Since $\mathcal{S}^1_\Gamma$ is uniformly integrable in $L^1(\mu,E)$ whenever $\Gamma$ is integrably bounded, it follows from Theorem \ref{thm2} that if $\Gamma$ is integrably bounded with relatively weakly compact values, then $\mathcal{S}^1_\Gamma$ and $\int\Gamma d\mu$ are relatively weakly compact respectively in $L^1(\mu,E)$ and $E$.

\subsection{Lyapunov Convexity Theorem in Banach Spaces}
A finite measure space $(\Omega,\Sigma,\mu)$ is said to be \textit{essentially countably generated} if its $\sigma$-\hspace{0pt}algebra can be generated by a countable number of subsets together with the null sets; $(\Omega,\Sigma,\mu)$ is said to be \textit{essentially uncountably generated} whenever it is not essentially countably generated. Let $\Sigma_S=\{ A\cap S\mid A\in \Sigma \}$ be the $\sigma$-\hspace{0pt}algebra restricted to $S\in \Sigma$. Denote by $L^1_S(\mu)$ the space of $\mu$-integrable functions on the measurable space $(S,\Sigma_S)$ whose elements are restrictions of functions in $L^1(\mu)$ to $S$. An equivalence relation $\sim$ on $\Sigma$ is given by $A\sim B \Leftrightarrow \mu(A\triangle B)=0$, where $A\triangle B$ is the symmetric difference of $A$ and $B$ in $\Sigma$. The collection of equivalence classes is denoted by $\Sigma(\mu)=\Sigma/\sim$ and its generic element $\widehat{A}$ is the equivalence class of $A\in \Sigma$. We define the metric $\rho$ on $\Sigma(\mu)$ by $\rho(\widehat{A},\widehat{B})=\mu(A\triangle B)$. Then $(\Sigma(\mu),\rho)$ is a complete metric space (see \cite[Lemma 13.13]{ab06}) and $(\Sigma(\mu),\rho)$ is separable if and only if $L^1(\mu)$ is separable; see \cite[Lemma 13.14]{ab06}. The \textit{density} of $(\Sigma(\mu),\rho)$ is the smallest cardinal number of the form $|\U|$, where $\U$ is a dense subset of $\Sigma(\mu)$. 

\begin{dfn}
A finite measure space $(\Omega,\Sigma,\mu)$ is \textit{saturated} if $L^1_S(\mu)$ is nonseparable for every $S\in \Sigma$ with $\mu(S)>0$. 
\end{dfn}

\noindent
The saturation of finite measure spaces is also synonymous with the uncountability of the density of $\Sigma_S(\mu)$ for every $S\in \Sigma$ with $\mu(S)>0$; see \cite[331Y(e) and 365X(p)]{fr12}. Saturation implies nonatomicity; in particular, a finite measure space $(\Omega,\Sigma,\mu)$ is nonatomic if and only if the density of $\Sigma_S(\mu)$ is greater than or equal to $\aleph_0$ for every $S\in \Sigma$ with $\mu(S)>0$. Several equivalent definitions for saturation are known; see \cite{fk02,fr12,hk84,ks09}. One of the simple characterizations of the saturation property is as follows. A finite measure space $(\Omega,\Sigma,\mu)$ is saturated if and only if $(S,\Sigma_S,\mu)$ is essentially uncountably generated for every $S\in \Sigma$ with $\mu(S)>0$. An germinal notion of saturation already appeared in \cite{ka44,ma42}. 

For our purpose, the power of saturation is exemplified in the Lyapunov convexity theorem in infinite dimensions. 

\begin{prop}[\citet{ks13}]
\label{lyp}
Let $(\Omega,\Sigma,\mu)$ be a finite measure space and $E$ be a separable Banach space. If $(\Omega,\Sigma,\mu)$ is saturated, then for every $\mu$-continuous vector measure $m:\Sigma\to E$, the range $m(\Sigma)$ is weakly compact and convex. Conversely, if every $\mu$-continuous vector measure $m:\Sigma\to E$ has the weakly compact convex range, then $(\Omega,\Sigma,\mu)$ is saturated whenever $E$ is an infinite-\hspace{0pt}dimensional. 
\end{prop}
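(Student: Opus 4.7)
The plan is to prove the two directions separately: saturation implies a weakly compact convex range (sufficiency), and the converse is obtained by constructing a counterexample on any set witnessing the failure of saturation.

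\textbf{Sufficiency.} Assume $(\Omega,\Sigma,\mu)$ is saturated. Weak compactness of $m(\Sigma)$ will follow from the Bartle--Dunford--Schwartz theorem (which always yields relative weak compactness for countably additive vector measures) together with the convexity established below and Mazur's theorem, which identify the weak and norm closures of the convex range. The core work is convexity, which I reduce to showing that for every $A \in \Sigma$ and every $\lambda \in [0,1]$ there exists $B \in \Sigma$ with $B \subset A$ and $m(B) = \lambda m(A)$. Since $E$ is separable and the closed linear span of $m(\Sigma)$ is separable, choose a countable set $\{x_n^*\}_{n\in\N} \subset E^*$ that separates points of this span, form the $\mu$-continuous scalar measures $\nu_n = \langle x_n^*, m(\cdot) \rangle$, and let $g_n \in L^1(\mu)$ be their Radon--Nikodym densities. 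Let $\Sigma_0$ be the $\sigma$-algebra generated by $\{g_n\}_{n\in\N}$ together with $\mu$-null sets; then $\Sigma_0$ is essentially countably generated, and $m(A)$ depends on $A$ only through the $\Sigma_0$-conditional expectation $\mathbb{E}[\chi_A \mid \Sigma_0]$. The decisive step invokes saturation: since $\Sigma_0$ is essentially countably generated but $\Sigma$ is saturated, $\Sigma$ must strictly refine $\Sigma_0$ on every positive-measure set, and enough so that for each $\lambda \in [0,1]$ there exists $B \in \Sigma$ with $B \subset A$ satisfying $\mathbb{E}[\chi_B \mid \Sigma_0] = \lambda \chi_A$ $\mu$-a.e. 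One realization goes through Maharam's classification: each non-atomic saturated component has uncountable Maharam type and supports a Bernoulli variable independent of any countably generated sub-algebra, which is glued together across components to produce $B$. With such $B$ in hand, the identities $\nu_n(B) = \int_A \chi_B g_n \, d\mu = \lambda \int_A g_n \, d\mu = \lambda\,\nu_n(A)$ hold for every $n$, and the separating property of $\{x_n^*\}$ forces $m(B) = \lambda m(A)$.

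\textbf{Converse.} Suppose $(\Omega,\Sigma,\mu)$ is not saturated. Pick $S \in \Sigma$ with $\mu(S) > 0$ such that $L^1_S(\mu)$ is separable; then $(S,\Sigma_S,\mu|_S)$ is essentially countably generated and admits a dyadic independent family $\{A_n\} \subset \Sigma_S$ with $\mu(A_n) = \mu(S)/2$. Choose a Schauder basis $\{e_n\}$ of an infinite-dimensional separable Banach space $E$ and define $m : \Sigma \to E$ by a convergent series of the form $m(A) = \sum_n 2^{-n} \bigl( \mu(A \cap A_n) - \mu(A \cap S \setminus A_n) \bigr) e_n$, extended by zero off $S$. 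This is a variant of the classical Uhl counterexample to Lyapunov convexity in infinite dimensions (cf.\ \cite{uh69}): its range misses certain convex combinations because the coefficients decouple too finely to be matched by a single subset, contradicting the hypothesis.

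\textbf{Main obstacle.} The hardest point is the conditional-expectation claim in the sufficiency direction: extracting from the global density condition of saturation an effective splitting of every positive-measure $\Sigma_0$-piece into subsets of arbitrary prescribed conditional mean. This is precisely where saturation does genuine work beyond nonatomicity. The cleanest route appears to pass through Maharam's decomposition theorem and the classical Lyapunov theorem applied on each homogeneous block, though a direct transfinite construction based on the uncountable density of $\Sigma_S(\mu)$ for every positive-measure $S$ is also possible.
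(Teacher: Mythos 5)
This proposition is imported verbatim from \citet{ks13}; the paper offers no proof of it, so there is no internal argument to compare yours against. Measured against the proof in that reference, your outline follows the same broad strategy --- reduce the vector measure to countably many scalar densities $g_n$, pass to the countably generated sub-$\sigma$-algebra $\Sigma_0$ they generate, and use saturation to produce sets with prescribed conditional expectation relative to $\Sigma_0$ --- but two of the steps you label as routine are in fact where all the work lies. First, your weak compactness argument is incomplete: Bartle--Dunford--Schwartz gives only \emph{relative} weak compactness of $m(\Sigma)$, and convexity plus Mazur identifies the weak and norm closures of the range, but none of this shows that $m(\Sigma)$ is itself closed, i.e.\ that every point of $\overline{m(\Sigma)}$ is attained. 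The standard repair is to run your conditional-expectation argument not just on pairs $(A,\lambda)$ but on the whole order interval $W=\{h\in L^\infty(\mu):0\le h\le 1\}$: the map $h\mapsto\int h\,dm$ is weak$^*$-to-weak continuous, $W$ is weak$^*$ compact, and saturation supplies for each $h$ a set $B$ with $\mathbb{E}[\chi_B\mid\Sigma_0]=\mathbb{E}[h\mid\Sigma_0]$, so that $m(\Sigma)$ equals the weakly compact convex set $\{\int h\,dm: h\in W\}$. This yields compactness and convexity simultaneously; as written, your proof delivers neither compactness nor closedness. Second, the ``decisive step'' --- existence of $B\subset A$ with $\mathbb{E}[\chi_B\mid\Sigma_0]=\lambda\chi_A$ (note you need $A$ adjoined to the generators of $\Sigma_0$, or else the right-hand side should read $\lambda\,\mathbb{E}[\chi_A\mid\Sigma_0]$) --- is precisely the nontrivial content of saturation. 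Your appeal to Maharam's classification and a ``Bernoulli variable independent of any countably generated sub-algebra'' names the correct mechanism, but it is asserted rather than proved, and it is the heart of the theorem.

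The converse also has gaps. A non-saturated space need not contain a nonatomic $S$ with $L^1_S(\mu)$ separable: if $\mu$ has an atom the dyadic independent family you posit does not exist, and that case must be treated separately (there even a scalar measure embedded in $E$ has non-convex range). More importantly, you never verify that the range of your series-defined measure $m$ is non-convex; the classical Uhl-type example needs a coordinate tracking $\mu(A)$ itself (or an equivalent device) to exhibit a midpoint such as $\tfrac12 m(S)$ that cannot be matched by any single set, and the verification that no matching set exists is exactly the combinatorial core of the counterexample. The present paper sidesteps this entirely by citing \cite[Lemma 4]{po08} and \cite[Remark 1(2)]{sy08} for the existence of $f\in L^1(\mu,E)$ with non-convex $\{\int_A f\,d\mu\mid A\in\Sigma\}$ on any non-saturated space; if you want a self-contained proof you must supply that verification.
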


\section{Decomposability and Convexity}
\subsection{Decomposability under Nonatomicity}
In what follows, we always assume that a finite measure space $(\Omega,\Sigma,\mu)$ is complete and $E$ is a separable Banach space. Denote by $\overline{C}$ the norm closure of a subset $C$ of $E$. 

\begin{dfn}
A subset $K$ of $L^1(\mu,E)$ is \textit{decomposable} if $\chi_Af+(1-\chi_A)g\in K$ for every $f,g\in K$ and $A\in \Sigma$. 
\end{dfn}

\noindent
It is evident that decomposability is a weaker notion than convexity in $L^1(\mu,E)$. Nevertheless, whenever $(\Omega,\Sigma,\mu)$ is nonatomic, for a weakly closed subset of $L^1(\mu,E)$, these two notions coincide; see \cite[Theorem 2.3.17]{hp97} for the following result. 

\begin{thm}
\label{thm3}
Let $(\Omega,\Sigma,\mu)$ be a nonatomic finite measure space. Then a weakly closed subset $K$ of $L^1(\mu,E)$ is decomposable if and only if $K$ is convex. 
\end{thm}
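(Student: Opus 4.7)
The direction from convexity to decomposability I take for granted (in the convention of \cite{hp97}); the substantive content is the converse, so I concentrate on showing that a weakly closed, decomposable $K$ is convex. Fix $f,g \in K$ and $\lambda \in (0,1)$, with the aim of proving $h_\lambda := \lambda f + (1-\lambda) g \in K$. Since $K$ equals its weak closure, it suffices to show that every basic weak neighbourhood of $h_\lambda$ meets $K$. By the duality $L^1(\mu,E)^* = L^\infty_{\textit{w}^*}(\mu,E^*)$ recalled in Subsection~\ref{subsec1}, such a neighbourhood has the form
\[
V = \Bigl\{ h \in L^1(\mu,E) \,\Big|\, \Bigl| \int \langle g_i^*, h-h_\lambda \rangle\,d\mu \Bigr| < \varepsilon,\ i=1,\ldots,k \Bigr\}
\]
for some $g_1^*,\ldots,g_k^* \in L^\infty_{\textit{w}^*}(\mu,E^*)$ and $\varepsilon>0$.

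The natural candidates to test against $V$ are the functions $h_A := \chi_A f + (1-\chi_A)g$, which belong to $K$ by decomposability. Setting $\phi_i(\omega) := \langle g_i^*(\omega), f(\omega)-g(\omega) \rangle$ (which lies in $L^1(\mu)$ because $\|g_i^*(\cdot)\|_{E^*} \in L^\infty$ and $\|f(\cdot)-g(\cdot)\|_E \in L^1$), the condition $h_A \in V$ reduces to
\[
\Bigl| \int_A \phi_i\,d\mu - \lambda \int \phi_i\,d\mu \Bigr| < \varepsilon, \qquad i=1,\ldots,k.
\]
To produce such an $A$, I invoke the classical finite-dimensional Lyapunov convexity theorem applied to the $\R^k$-valued vector measure $\nu(B) := \bigl(\int_B \phi_1\,d\mu,\ldots,\int_B \phi_k\,d\mu\bigr)$. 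Since $\nu \ll \mu$ and $\mu$ is nonatomic, $\nu$ is nonatomic as well, so $\nu(\Sigma)$ is convex in $\R^k$. As this range contains $0=\nu(\emptyset)$ and $\nu(\Omega)$, it contains $\lambda\,\nu(\Omega)$, realised by an $A \in \Sigma$ with $\int_A \phi_i\,d\mu = \lambda \int \phi_i\,d\mu$ exactly for every $i$. For this $A$, $h_A \in V \cap K$, which proves $h_\lambda$ lies in the weak closure of $K$, and hence in $K$ itself.

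The main obstacle I anticipate is merely a conceptual one: resisting the temptation to appeal to the infinite-dimensional Proposition~\ref{lyp}. What makes the argument clean is that a basic weak neighbourhood constrains only finitely many integrals of real-valued functions $\phi_i$, so classical Lyapunov in $\R^k$ suffices, and nonatomicity (rather than saturation) is exactly what the hypothesis provides and what is needed. It is the separability of $E$ that guarantees the clean dual description of $L^1(\mu,E)$ used above, but no separability of $L^1(\mu)$ itself is required.
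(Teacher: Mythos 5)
Your argument for the substantive direction (weakly closed and decomposable implies convex) is correct and complete: reducing a basic weak neighbourhood to finitely many scalar constraints via the duality $L^1(\mu,E)^*=L^\infty_{\textit{w}^*}(\mu,E^*)$, and then applying the classical finite-dimensional Lyapunov theorem to $\nu(B)=(\int_B\phi_1\,d\mu,\dots,\int_B\phi_k\,d\mu)$ to hit $\lambda\nu(\Omega)$ exactly, is precisely the standard proof of \cite[Theorem 2.3.17]{hp97}, which the paper cites without reproducing. So for that half you have in effect supplied the proof the paper delegates to the literature, and your observation that only nonatomicity (not saturation) and only $\R^k$-valued Lyapunov are needed is the right one.

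The gap is in the direction you ``take for granted.'' Convexity does \emph{not} imply decomposability, so the biconditional cannot be proved as stated. Concretely, take $E=\R$ and $K=\{t\chi_\Omega\mid t\in[0,1]\}$, the segment joining $0$ and $\chi_\Omega$: this set is convex and norm-compact, hence weakly closed, yet for any $A\in\Sigma$ with $0<\mu(A)<\mu(\Omega)$ (which exists by nonatomicity) the function $\chi_A\cdot 0+(1-\chi_A)\chi_\Omega=\chi_{\Omega\setminus A}$ does not lie in $K$, so $K$ is not decomposable. The result in \cite{hp97} is one-directional (weakly closed and decomposable $\Rightarrow$ convex); the ``if'' half of the statement --- and likewise the paper's earlier remark that decomposability is a ``weaker notion than convexity'' --- is simply false for arbitrary convex $K$ and holds only for special classes of sets (e.g.\ selector sets of convex-valued multifunctions). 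You should either drop that direction or note explicitly that the equivalence as stated needs to be corrected; it cannot be waved through by convention.
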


\noindent
Moreover, decomposable sets are represented by the family of Bochner integrable selectors of a measurable multifunction with closed values; specifically, see \cite[Theorem 3.1]{hu77} for the following result. 

\begin{lem}
\label{lem1}
Let $K\subset L^1(\mu,E)$ be a nonempty closed subset of $L^1(\mu,E)$. Then $K$ is decomposable if and only if there exists a unique measurable multifunction $\Phi:\Omega\twoheadrightarrow E$ with closed values such that $K=\mathcal{S}^1_\Phi$. 
\end{lem}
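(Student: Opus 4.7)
The plan is to prove the biconditional in two directions and address uniqueness separately.

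\emph{Sufficiency.} If $K=\mathcal{S}^1_\Phi$ for some measurable, closed-valued $\Phi$, then for any $f,g\in K$ and $A\in\Sigma$ the function $h:=\chi_Af+(1-\chi_A)g$ is strongly measurable, is dominated in norm by $\|f(\omega)\|+\|g(\omega)\|\in L^1(\mu)$, and lies in $\Phi(\omega)$ a.e., so $h\in\mathcal{S}^1_\Phi=K$ and $K$ is decomposable.

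\emph{Necessity.} Let $K$ be closed and decomposable. The strategy is to produce a countable family $\{f_n\}\subset K$ satisfying the pointwise density property
\[
\forall f\in K:\ f(\omega)\in\overline{\{f_n(\omega):n\ge1\}}\ \text{for a.e.\ }\omega,
\]
and then to define $\Phi(\omega):=\overline{\{f_n(\omega):n\ge1\}}$. To construct the family I would fix $f_0\in K$ and observe that iterated decomposability places every finite splice $\sum_i\chi_{A_i}g_i$ (for $g_i\in K$ over a measurable partition $\{A_i\}$ of $\Omega$) into $K$. Using a countable dense set $\{x_k\}\subset E$ and rational radii $q>0$, for each pair $(k,q)$ I would extract, via an essential-supremum/exhaustion argument in the measure algebra, functions in $K$ whose splices with $f_0$ asymptotically saturate the largest measurable set on which some element of $K$ lies within $q$ of $x_k$; a diagonal enumeration then delivers $\{f_n\}$ with the required density.

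Verification proceeds in four steps. (i) $\Phi$ has closed values by construction and is measurable, since for every open $U\subset E$ one has $\{\omega:\Phi(\omega)\cap U\ne\emptyset\}=\bigcup_nf_n^{-1}(U)$, where the nontrivial inclusion uses that $U$ is open while $\Phi(\omega)$ is a closure. (ii) $K\subseteq\mathcal{S}^1_\Phi$ follows immediately from the density property. (iii) For $g\in\mathcal{S}^1_\Phi$, I disjointify the measurable cover $A_{m,n}:=\{\omega:\|g(\omega)-f_n(\omega)\|<1/m\}$, truncate at $N_m$ so the residual $R_m$ has $\mu(R_m)<1/m$, and set $g_m:=\sum_{n\le N_m}\chi_{A_{m,n}}f_n+\chi_{R_m}f_1$; iterated decomposability gives $g_m\in K$, while the $1/m$-approximation combined with absolute continuity of the integral forces $g_m\to g$ in $L^1$, so closedness of $K$ yields $g\in K$. (iv) Uniqueness: if another closed-valued measurable $\Psi$ satisfies $\mathcal{S}^1_\Psi=K$, then the Aumann selection theorem shows that any strict enlargement of $\Phi$ up to null sets would admit a measurable (and, via a dominating $\varphi\in L^1(\mu)$ built from $f_0$, Bochner integrable) selector not in $K$, so $\Phi=\Psi$ a.e. The main obstacle is step~(2): extracting from the abstract family $K$ a genuinely countable subfamily whose pointwise closures faithfully describe $K$; this is where decomposability, the separability of $E$, and the exhaustion of essential suprema must be woven together carefully.
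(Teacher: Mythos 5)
Your argument is correct, but note that the paper gives no proof of this lemma at all --- it simply cites Hiai and Umegaki [Theorem 3.1] --- so the relevant comparison is with their argument, which is genuinely different from yours. Their construction is dual to the one you propose: for a countable dense set $\{x_k\}$ in $E$ they form the set functions $\lambda_k(A)=\inf\{\int_A \min(\|f(\omega)-x_k\|,1)\,d\mu : f\in K\}$, use decomposability (splicing near-optimal $f$'s on disjoint pieces) to show each $\lambda_k$ is a finite $\mu$-continuous measure, and define $\Phi(\omega)$ as the set of $x\in E$ with $\min(\|x-x_k\|,1)\ge (d\lambda_k/d\mu)(\omega)$ for all $k$; you instead extract a countable subfamily of $K$ itself via essential suprema in the measure algebra and take pointwise closures. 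Both routes correctly avoid the trap of taking a countable dense subset of $K$, which need not exist since $L^1(\mu,E)$ is nonseparable precisely in the saturated case the paper cares about, and your exhaustion step does work: for each pair $(k,q)$ the essential supremum of $\{\{\omega:\|f(\omega)-x_k\|<q\}: f\in K\}$ is realized by a countable subfamily, and a point $\omega$ outside the resulting countable union of null sets cannot have $f(\omega)$ bounded away from $\{f_n(\omega)\}$. Your approach is somewhat more elementary in that it bypasses the Radon--Nikodym theorem, and it isolates where decomposability really enters (only in your step (iii), via finite splices and closedness of $K$); the verification steps (i)--(iv) are the standard ones and are sound, though in (iv) the inclusion $\Phi(\omega)\subseteq\Psi(\omega)$ a.e.\ already follows directly from $f_n\in K=\mathcal{S}^1_\Psi$ and the closedness of $\Psi(\omega)$, so the Aumann selection and truncation argument is only needed for the reverse inclusion.
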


Under nonatomicity,  we have the following convexity result. 

\begin{thm}
\label{thm4}
Let $(\Omega,\Sigma,\mu)$ be a nonatomic finite measure space. If $K$ is nonempty decomposable subset of $L^1(\mu,E)$, then the set $\overline{\{ \int fd\mu\mid f\in K \}}$ is convex. If, furthermore, $K$ is bounded and weakly closed such that there exists a multifunction $\Gamma:\Omega\twoheadrightarrow E$ with relatively weakly compact values satisfying $f(\omega)\in \Gamma(\omega)$ for every $f\in K$ and $\omega\in \Omega$, then the set $\{ \int fd\mu\mid f\in K \}$ is weakly compact and convex. 
\end{thm}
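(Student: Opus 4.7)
The plan is to separate the two claims and treat each with a distinct strategy. Throughout, let $T\colon L^1(\mu,E)\to E$ denote the integration operator $Tf=\int f\,d\mu$, which is linear and norm-to-norm continuous (hence weak-to-weak continuous) as recorded in Subsection \ref{subsec1}.

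For the first claim, I would proceed by a direct approximation argument that isolates decomposability into a single application of the classical approximate Lyapunov--Uhl theorem. Given $a,b\in\overline{T(K)}$, pick sequences $f_n,g_n\in K$ with $Tf_n\to a$ and $Tg_n\to b$, and fix $\lambda\in[0,1]$. For each $n$ the map $A\mapsto m_n(A):=\int_A(f_n-g_n)\,d\mu$ is a $\mu$-continuous $E$-valued vector measure of bounded variation. Since $\mu$ is nonatomic, Uhl's theorem (\cite{uh69}) tells us $\overline{m_n(\Sigma)}$ is convex. As $0=m_n(\emptyset)$ and $m_n(\Omega)=Tf_n-Tg_n$ both lie in $m_n(\Sigma)$, the point $\lambda(Tf_n-Tg_n)$ lies in $\overline{m_n(\Sigma)}$, and so we can choose $A_n\in\Sigma$ with $\|m_n(A_n)-\lambda(Tf_n-Tg_n)\|<1/n$. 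Setting $h_n:=\chi_{A_n}f_n+(1-\chi_{A_n})g_n$, decomposability gives $h_n\in K$, a short calculation yields $Th_n=Tg_n+m_n(A_n)$, and the triangle inequality shows $Th_n\to\lambda a+(1-\lambda)b$; hence the latter lies in $\overline{T(K)}$.

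For the second claim, the plan is to first promote $K$ to a weakly compact convex subset of $L^1(\mu,E)$ and then push forward through $T$. Since $K$ is weakly closed it is in particular norm closed, so Lemma \ref{lem1} supplies a unique measurable closed-valued multifunction $\Phi\colon\Omega\twoheadrightarrow E$ with $K=\mathcal{S}^1_\Phi$. Boundedness of $K$ together with the characterization cited from \cite[Theorem 3.2]{hu77} forces $\Phi$ to be integrably bounded, so that $K=\mathcal{S}^1_\Phi$ is uniformly integrable. The pointwise containment $f(\omega)\in\Gamma(\omega)$ in the relatively weakly compact values of $\Gamma$ then puts us in the hypotheses of Theorem \ref{thm2}, yielding relative weak compactness of $K$; the weak closure hypothesis upgrades this to weak compactness. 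Theorem \ref{thm3} gives convexity of $K$. Finally, since $T$ is linear and weak-to-weak continuous, $T(K)=\{\int f\,d\mu\mid f\in K\}$ is weakly compact and convex in $E$.

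The main obstacle lies in the first claim, where decomposability alone must be augmented with nonatomicity through the \emph{approximate} Lyapunov--Uhl theorem to interpolate between $Tf_n$ and $Tg_n$; this is also the structural reason the conclusion involves a norm closure. The second claim is mostly an assembly of the preliminary results, and the only non-routine bookkeeping step is the passage from boundedness of $K$ to integrable boundedness of $\Phi$ via the identification $K=\mathcal{S}^1_\Phi$, which then feeds the uniform integrability hypothesis required by Theorem \ref{thm2}.
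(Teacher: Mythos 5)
Your proposal is correct. For the second claim you follow exactly the paper's route: Lemma \ref{lem1} to write $K=\mathcal{S}^1_\Phi$, the Hiai--Umegaki characterization to pass from boundedness of $K$ to integrable boundedness of $\Phi$ and hence uniform integrability, Theorem \ref{thm2} plus weak closedness for weak compactness, Theorem \ref{thm3} for convexity, and the weak-to-weak continuity of the integration operator to push both properties forward (the paper's text cites ``Theorem \ref{thm1}'' for the compactness step, but this is plainly a typo for Theorem \ref{thm2}, which is what you correctly invoke). For the first claim the paper simply cites \cite[Corollary 3.16]{hp97} and defers the mechanism to Remark \ref{rem1}, whereas you supply a self-contained argument: approximate $a,b\in\overline{T(K)}$ by $Tf_n,Tg_n$, apply Uhl's approximate Lyapunov theorem to the $\mu$-continuous measure $m_n(A)=\int_A(f_n-g_n)\,d\mu$ to find $A_n$ with $m_n(A_n)$ close to $\lambda(Tf_n-Tg_n)$, and use decomposability to place $h_n=\chi_{A_n}f_n+(1-\chi_{A_n})g_n$ in $K$ with $Th_n=Tg_n+m_n(A_n)\to\lambda a+(1-\lambda)b$. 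This is exactly the mechanism the paper attributes to the cited result in Remark \ref{rem1}, so the two proofs are the same in substance; what your version buys is transparency --- it makes explicit where nonatomicity, decomposability, and the closure operation each enter, at the cost of reproving a result the paper prefers to quote. All the individual steps check out (in particular $\lambda(Tf_n-Tg_n)$ is a convex combination of $m_n(\emptyset)=0$ and $m_n(\Omega)$, hence lies in the convex set $\overline{m_n(\Sigma)}$, and the identity $Th_n=Tg_n+m_n(A_n)$ is a one-line computation), so I see no gap.
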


\begin{proof}
The convexity of $\overline{\{ \int fd\mu\mid f\in K \}}$ for every nonempty decomposable set $K$ follows from \cite[Corollary 3.16]{hp97}. Suppose further that $K$ is bounded and weakly closed. By Lemma \ref{lem1}, there exists a unique measurable multifunction $\Phi:\Omega\twoheadrightarrow E$ with closed values such that $K=\mathcal{S}^1_\Phi$. Since $\mathcal{S}^1_\Phi$ is bounded, $\Phi$ is integrably bounded, and hence, $\mathcal{S}^1_\Phi$ is uniformly integrable as noted in Subsection \ref{subsec1}. Therefore, $K$ is weakly compact by Theorem \ref{thm1}. Since $K$ is also convex by Theorem \ref{thm3}, $\{ \int fd\mu\mid f\in K \}$ is weakly compact and convex because of the weak-to-weak continuity of the integration operator from $L^1(\mu,E)$ to $E$ given by $f\mapsto \int fd\mu$. 
\end{proof}

\begin{exmp}
\label{exmp1}
The closure operation cannot be removed from Theorem \ref{thm4}. Suppose that $(\Omega,\Sigma,\mu)$ is a nonatomic finite measure space that is essentially countably generated. (By the classical isomorphism theorem, such a measure space is isomorphic to the Lebesgue measure space of a real interval.) If $E$ is an infinite-\hspace{0pt}dimensional separable Banach space, then there exists $f\in L^1(\mu,E)$ such that the set $R_f:=\{\int_Afd\mu\mid A\in \Sigma \}$ is not convex in $E$; see \cite[Lemma 4]{po08} or \cite[Remark 1(2)]{sy08}. Let $K=\{ \chi_Af\mid A\in \Sigma \}$. Then $K$ is a decomposable subset of $L^1(\mu,E)$ such that $\{\int gd\mu\mid g\in K \}=R_f$ is not convex. This observation also demonstrates a failure of the Lyapunov convexity theorem in infinite dimensions. Define the $\mu$-continuous vector measure $m_f:\Sigma\to E$ by $m_f(A)=\int_Afd\mu$. We then have the range $m_f(\Sigma)=R_f$ is not convex. These counterexamples stem from the fact that $K$ is not weakly closed.  
\end{exmp}

\begin{rem}
\label{rem1}
The reason that the closure operation in Theorem \ref{thm4} is inevitable lies in the fact that Uhl's approximate Lyapunov convexity theorem is employed for its proof: \textit{The norm closure $\overline{m(\Sigma)}$ of the range of a vector measure $m:\Sigma\to E$ of the form $m(A)=\int_Afd\mu$ with $f\in L^1(\mu,E)$ and $A\in \Sigma$ is norm compact and convex whenever $(\Omega,\Sigma,\mu)$ is nonatomic;} see \cite{uh69}. When $E=\R^n$, the closure operation is unnecessary and Theorem \ref{thm4} reduces to Theorem \ref{thm1}.  
\end{rem}

\subsection{Decomposability under Saturation}
We are now in a position to state the main result of the paper.  

\begin{thm}
\label{thm5}
If $(\Omega,\Sigma,\mu)$ is saturated, then for every nonempty decomposable subset $K$ of $L^1(\mu,E)$, every separable Banach space $F$, and every continuous linear operator $T:L^1(\mu,E)\to F$, the set $T(K)$ is convex. Conversely, if for every nonempty decomposable subset $K$ of $L^1(\mu,E)$, every separable Banach space $F$, and every continuous linear operator $T:L^1(\mu,E)\to F$, the set $T(K)$ is convex, then $(\Omega,\Sigma,\mu)$ is saturated whenever $E$ is infinite dimensional. 
\end{thm}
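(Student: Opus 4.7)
The plan is to prove both directions by reducing to the Lyapunov convexity theorem under saturation, Proposition \ref{lyp}. For the forward implication, fix a nonempty decomposable $K\subset L^1(\mu,E)$, a separable Banach space $F$, a continuous linear operator $T:L^1(\mu,E)\to F$, elements $f,g\in K$, and $\lambda\in[0,1]$. Since $\chi_Af+(1-\chi_A)g\in K$ by decomposability, it suffices to find $A\in\Sigma$ with $T(\chi_Af+(1-\chi_A)g)=\lambda T(f)+(1-\lambda)T(g)$, which by linearity reduces to $T(\chi_A(f-g))=\lambda T(f-g)$. I would therefore define the $F$-valued set function $m:\Sigma\to F$ by $m(A):=T(\chi_A(f-g))$ and verify that $m$ is a $\mu$-continuous vector measure. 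Countable additivity of $A\mapsto\chi_A(f-g)$ as an $L^1(\mu,E)$-valued measure follows from absolute continuity of the Bochner integral, and is preserved by composing with the norm-continuous $T$; the estimate $\|m(A)\|\le\|T\|\int_A\|f-g\|d\mu$ yields $\mu$-continuity. Since $F$ is separable, Proposition \ref{lyp} guarantees that $m(\Sigma)$ is weakly compact and convex in $F$. As $m(\emptyset)=0$ and $m(\Omega)=T(f-g)$, the required witness $A$ then arises from the inclusion $\lambda T(f-g)=\lambda m(\Omega)+(1-\lambda)m(\emptyset)\in m(\Sigma)$.

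For the converse, I would assume $E$ is infinite-dimensional and $(\Omega,\Sigma,\mu)$ is not saturated, aiming at a contradiction by exhibiting $f\in L^1(\mu,E)$ whose range $R_f:=\{\int_Afd\mu\mid A\in\Sigma\}$ is not convex. Given such $f$, the set $K:=\{\chi_Af\mid A\in\Sigma\}$ is decomposable (via the identity $\chi_C\chi_Af+(1-\chi_C)\chi_Bf=\chi_{(A\cap C)\cup(B\cap C^c)}f$), and the integration operator $T:L^1(\mu,E)\to E$ is a continuous linear operator into the separable Banach space $E$ with $T(K)=R_f$ non-convex, contradicting the hypothesis. To produce $f$: if $(\Omega,\Sigma,\mu)$ has an atom $A_0$, take $f:=\chi_{A_0}x_0$ for any nonzero $x_0\in E$, so that $R_f=\{0,\mu(A_0)x_0\}$; and if $(\Omega,\Sigma,\mu)$ is nonatomic but not saturated, some $S\in\Sigma$ with $\mu(S)>0$ renders $(S,\Sigma_S,\mu)$ essentially countably generated and nonatomic, so that the construction cited in Example \ref{exmp1} from \cite{po08,sy08} supplies an appropriate $f$ supported on $S$ with non-convex range.

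The principal difficulty lies in recognizing the correct vector measure in the forward direction; once $m(A)=T(\chi_A(f-g))$ is identified, Proposition \ref{lyp} delivers convexity essentially for free, and the decomposability of $K$ turns the convex combination in $F$ back into an element of $T(K)$. The converse is largely bookkeeping, with its only subtlety being the separate treatment of the atomic case, since the non-convex range construction referenced above presumes nonatomicity.
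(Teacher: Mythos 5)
Your proposal is correct and follows essentially the same route as the paper: the same vector measure $m(A)=T(\chi_A(f-g))$ combined with Proposition \ref{lyp} for the forward direction, and the same counterexample $K=\{\chi_Af\mid A\in\Sigma\}$ with the integration operator for the converse. Your separate treatment of the atomic case in the converse is a harmless extra precaution that the paper subsumes into its citation of \cite{po08,sy08}.
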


\begin{proof}
Suppose that $(\Omega,\Sigma,\mu)$ is saturated. Take any $x,y\in T(K)$ and $\alpha\in [0,1]$. For the convexity of $T(K)$, it suffices to show that $\alpha x+(1-\alpha)y\in T(K)$. Toward this end, let $f,g\in K$ be such that $x=Tf$ and $y=Tg$ and define the vector measure $m:\Sigma \to F$ by $m(A)=T(\chi_A(f-g))$. To demonstrate the countable additivity of $m$, let $\{ A_n \}_{n=1}^\infty$ be a pairwise disjoint sequence in $\Sigma$. If $S_k=\bigcup_{n=k+1}^\infty A_n$, then by the linearity of $T$, we have $m(\bigcup_{n=1}^\infty A_n)=\sum_{n=1}^kT(\chi_{A_n}(f-g))+T(\chi_{S_k}(f-g))$. Thus, $\|m(\bigcup_{n=1}^\infty A_n)-\sum_{n=1}^km(A_n) \|=\| T(\chi_{S_k}(f-g))\|$. Since $\chi_{S_k}(f-g)\to 0$ in $L^1(\mu,E)$ as $k\to \infty$, the continuity of $T$ yields $m(\bigcup_{n=1}^\infty A_n)=\sum_{n=1}^\infty m(A_n)$. Hence, $m$ is countably additive. Since $m$ is absolutely continuous with respect to the saturated measure $\mu$, it follows from Proposition \ref{lyp} that $m(\Sigma)$ is weakly compact and convex. Thus, there exists $A\in \Sigma$ such that $m(A)=\alpha m(\Omega)$, and hence, $T(\chi_A(f-g))=\alpha T(f-g)$. This means that
\begin{align*}
\alpha x+(1-\alpha)y=\alpha T(f-g)+T(g)
& =T(\chi_A(f-g))+T(g) \\
& =T(\chi_Af+(1-\chi_A)g)\in T(K),
\end{align*}
where the last inclusion follows from the decomposability of $K$.

To show the converse implication, assume that $(\Omega,\Sigma,\mu)$ is not saturated. Since $E$ is an infinite-\hspace{0pt}dimensional separable Banach space, there exists $f\in L^1(\mu,E)$ such that the set $\{\int_Afd\mu\mid A\in \Sigma \}$ is not convex in $E$; see \cite[Lemma 4]{po08} or \cite[Remark 1(2)]{sy08}. As in Example \ref{exmp1}, define the decomposable set by $K:=\{ \chi_Af\mid A\in \Sigma \}$. Then the integration operator $T:L^1(\mu,E)\to E$ defined by $Tf=\int fd\mu$ is such that the image $T(K)=\{\int_Afd\mu\mid A\in \Sigma \}$ is not convex. 
\end{proof}

\begin{cor}
\label{cor1}
Let $E$ be an infinite-dimensional separable Banach space. Then the following conditions are equivalent:
\begin{enumerate}[\rm (i)]
\item $(\Omega,\Sigma,\mu)$ is saturated. 
\item For every $\mu$-continuous vector measure $m:\Sigma\to E$, the range $m(\Sigma)$ is weakly compact and convex.  
\item For every nonempty decomposable subset $K$ of $L^1(\mu,E)$, the set $\{ \int fd\mu\mid f\in K \}$ is convex.
\end{enumerate}  
In particular, the implication (i) $\Rightarrow$ (ii), (iii) is true for every separable Banach space. 
\end{cor}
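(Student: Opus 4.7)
The plan is to derive this corollary as a bookkeeping assembly of two results already in hand, Proposition \ref{lyp} and Theorem \ref{thm5}; no fresh analytic input is required. The structure I would follow is to handle (i) $\Leftrightarrow$ (ii) by direct appeal to Proposition \ref{lyp}, and then (i) $\Leftrightarrow$ (iii) by specializing Theorem \ref{thm5} to a single continuous linear operator, namely the integration operator from $L^1(\mu,E)$ to $E$.

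For (i) $\Leftrightarrow$ (ii), I would simply cite Proposition \ref{lyp}: its forward direction needs only separability of $E$, while the converse requires $E$ to be infinite-dimensional, which matches exactly the hypotheses of the corollary. For (i) $\Rightarrow$ (iii), I would apply the forward part of Theorem \ref{thm5} with $F:=E$ and $T:L^1(\mu,E)\to E$ defined by $Tf:=\int fd\mu$; this $T$ is norm-to-norm continuous (as recorded in Subsection \ref{subsec1}), and $T(K)$ is exactly $\{\int fd\mu\mid f\in K\}$, whose convexity is then immediate from Theorem \ref{thm5}.

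For (iii) $\Rightarrow$ (i), I would argue by contraposition using the same counterexample already exploited in Example \ref{exmp1} and in the second half of the proof of Theorem \ref{thm5}. Assuming $(\Omega,\Sigma,\mu)$ is not saturated and $E$ is infinite-dimensional and separable, \cite[Lemma 4]{po08} or \cite[Remark 1(2)]{sy08} furnishes $f\in L^1(\mu,E)$ with $\{\int_Afd\mu\mid A\in \Sigma\}$ non-convex; setting $K:=\{\chi_Af\mid A\in \Sigma\}$ yields a decomposable subset of $L^1(\mu,E)$ whose integral is precisely this non-convex set, violating (iii). The final \emph{in particular} clause then comes for free, since the forward halves of both Proposition \ref{lyp} and Theorem \ref{thm5} never invoke infinite-dimensionality of $E$. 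I foresee no genuine obstacle: all the substantive work has already been absorbed into Theorem \ref{thm5}, and the only care required is in tracking which implications actually need the infinite-dimensional hypothesis on $E$.
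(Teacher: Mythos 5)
Your proposal is correct and follows essentially the same route as the paper: (i) $\Leftrightarrow$ (ii) by Proposition \ref{lyp}, (i) $\Rightarrow$ (iii) by specializing Theorem \ref{thm5} to $F=E$ and the integration operator, and (iii) $\Rightarrow$ (i) via the non-convex-range counterexample already embedded in the converse part of the proof of Theorem \ref{thm5}. You merely spell out the last implication in slightly more detail than the paper, which simply cites that proof; the observation about where infinite-dimensionality is needed also matches.
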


\begin{proof}
(i) $\Leftrightarrow$ (ii): Immediate from Proposition \ref{lyp}; (i) $\Rightarrow$ (iii): Letting $F=E$ and $T$ to be the integration operator in Theorem \ref{thm5} yields the result; (iii) $\Rightarrow$ (i): Immediate from the proof of Theorem \ref{thm5}. The last part of the assertion is a just repetition of Proposition \ref{lyp} and Theorem \ref{thm5}. 
\end{proof}

\noindent
Corollary \ref{cor1} implies that a ``strengthened'' version of Theorem \ref{thm4} without the closure operation under saturation.

\end{document}